\documentclass[12pt]{amsart}
\pdfoutput=1
\usepackage[left=2.4cm,right=2.4cm,top=3cm,bottom=3.5cm]{geometry}
\usepackage{graphicx}
\usepackage{url}

\usepackage{caption}
\usepackage[bookmarks=true,%
    colorlinks=true,%
    linkcolor=blue,%
    citecolor=blue,%
    filecolor=blue,%
    menucolor=blue,%
    urlcolor=blue,%
    breaklinks=true]{hyperref}

\newtheorem*{theorem}{Theorem}
\newtheorem*{lemma}{Lemma}

\newcommand\BH{\mathbb{H}}

\title[Geodesic triangulations exist for cusped Platonic manifolds]{Geodesic triangulations exist\\ for cusped Platonic manifolds}

\author{Matthias Goerner}
\email{enischte@gmail.com}
\address{Pixar Animation Studios, 1200 Park Avenue, Emeryville, CA94608, USA}
\urladdr{http://unhyperbolic.org/}
\subjclass[2010]{57N10, 51M10; 20H10, 22E40}
\keywords{Hyperbolic 3-manifold, ideal triangulation, Platonic solid.}

\begin{document}

\begin{abstract}
We show that if a cusped hyperbolic manifold is Platonic, i.e., can be decomposed into isometric Platonic solids, it can also be decomposed into geodesic ideal tetrahedra.
\end{abstract}

\maketitle

It is an open question whether every cusped hyperbolic 3-manifold posseses a triangulation by ideal geodesic  non-flat tetrahedra. This has been proven under relaxed conditions such as allowing some tetrahedra to be flat \cite{PetronioWeeks:partiallyFlatTrig} or taking a cover \cite{LuoSchleimerTillmann:virtualGeometricTrig}. Here, we show it for cusped hyperbolic Platonic manifolds which were introduced in \cite{goerner:platCensus} to be manifolds that can be decomposed into isometric Platonic solids:

\begin{theorem}
An ideal geodesic cell decomposition of a cusped hyperbolic 3-manifold can be subdivided into ideal non-flat tetrahedra if any of the following hold:
\begin{enumerate}
\item each face of each cell is a triangle, or
\item all cells are cubes, or
\item all cells are dodecahedra,
\end{enumerate}
where the cubes/dodecahedra need not to be regular. In particular, every cusped hyperbolic Platonic manifold has an ideal geometric triangulation.
\end{theorem}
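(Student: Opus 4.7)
The plan is to treat each case separately, prescribing a combinatorial triangulation scheme whose diagonals agree on shared $2$-faces and whose per-cell pieces are non-flat ideal tetrahedra.

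For case (1), since every face is already triangular, the existing $2$-skeleton is itself a triangulation, so each $3$-cell can be subdivided independently with no compatibility constraint at faces. The natural choice is to fix an ideal vertex $v$ of a cell $C$ and cone from it: for each face $F$ of $C$ not containing $v$, form the ideal tetrahedron spanned by $v$ and the three vertices of $F$. Such a tetrahedron is non-flat exactly when $v$ is not concyclic on $\partial\BH^3$ with the vertices of $F$. The bad locus on the sphere at infinity is a finite union of round circles, so I would argue a good cone vertex exists by a simple dimension count, verifying for the specific ideal polyhedra with all-triangular faces (tetrahedra, octahedra, icosahedra, and their combinatorial variants) using symmetry for the regular case and openness under small deformations otherwise.

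For case (2), the key observation is that the combinatorial cube has a bipartite $1$-skeleton, with each color class inscribing a tetrahedron. Selecting one color class cuts the cube into that central inscribed tetrahedron together with four corner tetrahedra (the standard five-tet triangulation), and each square face then receives the diagonal joining its two same-color vertices. I would first try to extend a single $2$-coloring globally across the cube complex; if this always succeeds I am done, and otherwise I would fall back on the six-tet decomposition coming from a space diagonal per cube and show that space-diagonal choices can be made compatible on shared faces. Non-flatness of each piece is a generic condition, verified by checking that the relevant four-tuples of ideal vertices are not concyclic at infinity.

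Case (3) is the one I expect to be the main obstacle. Each pentagonal face admits five distinct fan triangulations, and unlike the cube there is no single ``apex'' or space diagonal that triangulates an ideal dodecahedron as cleanly. My plan here is to exploit an auxiliary combinatorial structure intrinsic to the dodecahedron --- for instance one of the five inscribed combinatorial cubes spanning its $20$ vertices, a distinguished antipodal pair, or a vertex $3$-coloring --- to simultaneously prescribe a fan triangulation on each of the $12$ pentagonal faces and a tetrahedral subdivision of the cell interior using only original vertices. Global consistency would then be verified on the face-complex and non-flatness checked cell by cell. The concluding statement about Platonic manifolds is immediate once the three cases are handled: tetrahedra, octahedra, and icosahedra fall under (1), cubes under (2), and dodecahedra under (3).
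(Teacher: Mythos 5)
There is a genuine gap, and it sits exactly where the theorem is hard. The crux of the statement is not subdividing each cell in isolation but choosing the face diagonals so that they agree across the face gluings, and your proposal does not resolve this in either of the two cases where it actually arises. For cubes, the five-tetrahedron scheme forces on each square face the diagonal joining the two vertices of the selected colour class, and two cubes glued along a face can demand the two \emph{different} diagonals of that face; a global two-colouring compatible with all gluings does not exist in general, and your fallback (``show that space-diagonal choices can be made compatible on shared faces'') is a restatement of the problem, not an argument. For dodecahedra you explicitly leave the construction open, listing candidate auxiliary structures (inscribed cubes, antipodal pairs, colourings) without showing that any of them produces a globally consistent choice of pentagon diagonals --- but that is the entire content of the hardest case. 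The paper's route is different and worth internalizing: a single lemma reduces everything to exhibiting, in each cell, one ideal vertex $v$ such that the diagonals on all faces through $v$ meet at $v$ (one then cones the remaining boundary $2$-cells to $v$), and the real work is a combinatorial assignment scheme --- orienting the cycles of opposite faces, letting each dodecahedron ``own'' one face of each opposite pair, classifying the ownership patterns, and resolving the resulting dependency cycles around ``ambiguous edges'' in several steps. Nothing playing that role appears in your sketch.

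Your treatment of non-flatness is also off target. When you cone a convex ideal polyhedron from one of its own vertices $v$, every face not containing $v$ lies in a supporting geodesic plane that misses $v$, so every cone tetrahedron is automatically non-flat; no concyclicity analysis, genericity claim, or case-by-case verification is needed. The dimension count you propose would not apply in any event, since the apex must be one of the finitely many ideal vertices of the cell, not a generic point of the sphere at infinity. Note finally that case (1) of the theorem concerns arbitrary cells all of whose faces are triangles, not only the deltahedral Platonic solids, so verifying ``tetrahedra, octahedra, icosahedra'' would not cover the stated generality even in the easiest case.
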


\begin{lemma}
Let $P$ be a convex ideal polyhedron in $\BH^3$. Consider a choice of non-intersecting face diagonals subdividing each face into triangles. The polyhedron $P$ can be subdivided into non-flat ideal tetrahedra yielding the chosen face diagonals if there is a vertex $v$ of $P$ such that the diagonals on the faces touching $v$ all meet at $v$.
\end{lemma}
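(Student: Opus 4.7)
The plan is to build the subdivision by coning every triangle in the triangulation of every face of $P$ not containing $v$ to the apex $v$. The hypothesis that the face diagonals on the faces touching $v$ all meet at $v$ means those faces are already fan-triangulated from $v$, hence will be automatically compatible with this construction and will not pick up any extra interior edges.

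To make the argument precise, I would first pass to the Klein projective model, in which $P$ is represented as a genuine Euclidean convex polytope with vertices on $S^2$ and every geodesic ideal tetrahedron corresponds to a Euclidean tetrahedron with the same vertices. Non-flatness in $\BH^3$ is then equivalent to affine independence of the four vertices in this model. In this setting the classical pyramidal fan decomposition at the vertex $v$ gives
\[
P = \bigcup_{F \not\ni v} \mathrm{conv}(F \cup \{v\}),
\]
with pairwise disjoint interiors. Combined with the prescribed triangulation of each face $F \not\ni v$ into triangles $T$, this yields a subdivision into candidate tetrahedra $\mathrm{conv}(T \cup \{v\})$. Each such tetrahedron is non-flat because $v$, as a vertex of $P$ not lying on the face $F$, is off the supporting plane of $F$.

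It remains to check that the induced triangulation of $\partial P$ realizes the prescribed face diagonals. For a face $F \not\ni v$ this is automatic by construction. For a face $F \ni v$, the edges of the subdivision lying on $F$ are the geodesic segments from $v$ to the other vertices of $F$; by convexity of $F$, each such segment is contained in $F$ and is either an edge of $P$ or a face diagonal emanating from $v$, which by hypothesis gives exactly the chosen fan triangulation of $F$. A minor ancillary point is that every vertex $w \neq v$ of $P$ actually appears as an apex vertex in our construction: for a vertex $w$ of a convex polytope one has $\bigcap_{G \ni w} G = \{w\}$, so some face must contain $w$ but not $v$, and $w$ is then picked up as a vertex of a triangle coned to $v$.

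The main concern I anticipate is justifying the pyramidal fan decomposition in the ideal hyperbolic setting, where $v$ sits at infinity and the faces may be non-compact ideal polygons; passing to the Klein model neatly sidesteps this difficulty and reduces the entire argument to the standard Euclidean fact about coning a convex polytope from one of its vertices.
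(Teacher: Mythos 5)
Your proposal is correct and is essentially the paper's own argument: the paper cones the triangles of the subdivided boundary that are not adjacent to $v$ (equivalently, the triangles on faces not containing $v$) to the apex $v$. You simply supply more detail than the paper does, in particular the reduction to the Klein model and the verification of non-flatness and of compatibility on the faces containing $v$.
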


\begin{proof}
Consider the 2-cell complex obtained by subdividing the polyhedron's surface along the diagonals and remove all cells that are adjacent to $v$. Coning this 2-cell complex to $v$ yields a subdivision of the polyhedron.
\end{proof}
The lemma and its proof were inspired by \cite{LuoSchleimerTillmann:virtualGeometricTrig}. While it is clear that each polyhedron of a decomposition into geodesic cells (such as the canonical cell decomposition \cite{EP}) can be subdivided into tetrahedra individually, the challenge is to do so such that the face diagonals introduced in the process are compatible with the face gluings.

\newcounter{diagonalAssignmentSteps}

\begin{proof}[Proof of theorem]
The case where each face of each cell is a triangle immediately follows from the lemma since no diagonals need to be chosen to cut a face into a triangle. The case where each cell is a cube was proven in \cite[Appendix]{goerner:platCensus}.\\
Let such a 3-manifold be a union $P_1\cup\dots\cup P_n$ of dodecahedra. Similarly to the case of the cube, we call a sequence $F_0,F_1,F_2,\dots,F_{k-1}$ of distinct faces in the decomposition $P_1\cup\dots\cup P_n$ a face cycle if $F_i$ and $F_{i+1}$ are opposite faces of the same dodecahedron for each $i=0,\dots,k-1$ (where indexing is cyclic so $F_0=F_k).$ Note that the reverse $F_{k-1},F_{k-2},\dots,F_0$ is also a face cycle, so the faces of the dodecahedral decomposition naturally partition into unoriented face cycles. Let us pick an orientation for each face cycle (which can also be thought of as picking an orientation for each of the circles that the dual 1-skeleton naturally splits into given that there is a notion of two dual edges touching a dual vertex in opposite directions induced from the notion of faces of a dodecahedron being opposite).\\
Such a choice of orientation induces an assignment of each 2-cell $F$ of the decomposition to one of the two representatives of $F$, i.e., to a face $f$ of one of the dodecahedra, say $P$. If a 2-cell was assigned to face $f$ of $P$, we say that $P$ owns $f$. Each dodecahedron owns exactly one face of any pair of opposite faces.\\
The idea is that if $P$ owns $f$, $P$ will control the choice of diagonals on $f$ and thus on the corresponding face $F$ of the decomposition.
We only allow arrangements of diagonals on a face so that all diagonals meet in one vertex of the face. We choose diagonals fulfilling the condition of the lemma in 5 steps:
\begin{enumerate}
\item For each dodecahedron owning three faces with a common vertex, we pick such three faces and choose the diagonals on these three faces to meet in the common vertex. For all other faces such a dodecahedron owns, choose diagonals randomly. \label{enum:stepOne}
\setcounter{diagonalAssignmentSteps}{\value{enumi}}
\end{enumerate}
\begin{figure}[h]
\captionsetup{width=7cm}
\begin{center}
\begin{minipage}[b]{7cm}
\begin{center}
\includegraphics[width=5.5cm]{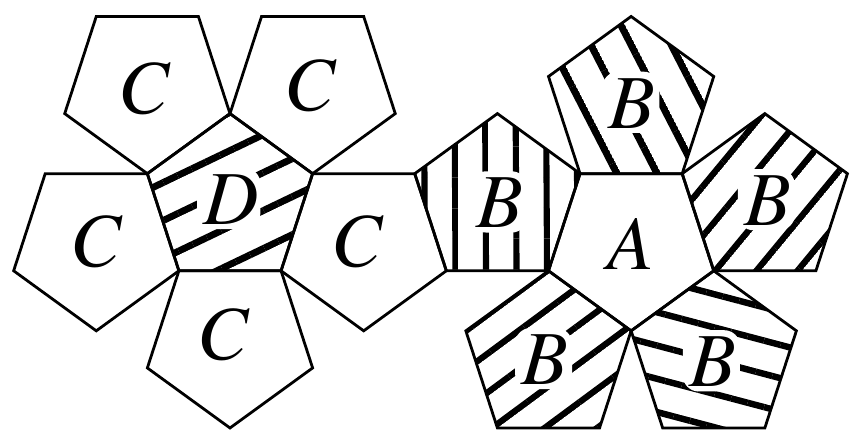}
\end{center}
\end{minipage} \hspace{1cm}
\begin{minipage}[b]{7cm}
\begin{center}
\includegraphics[width=4cm]{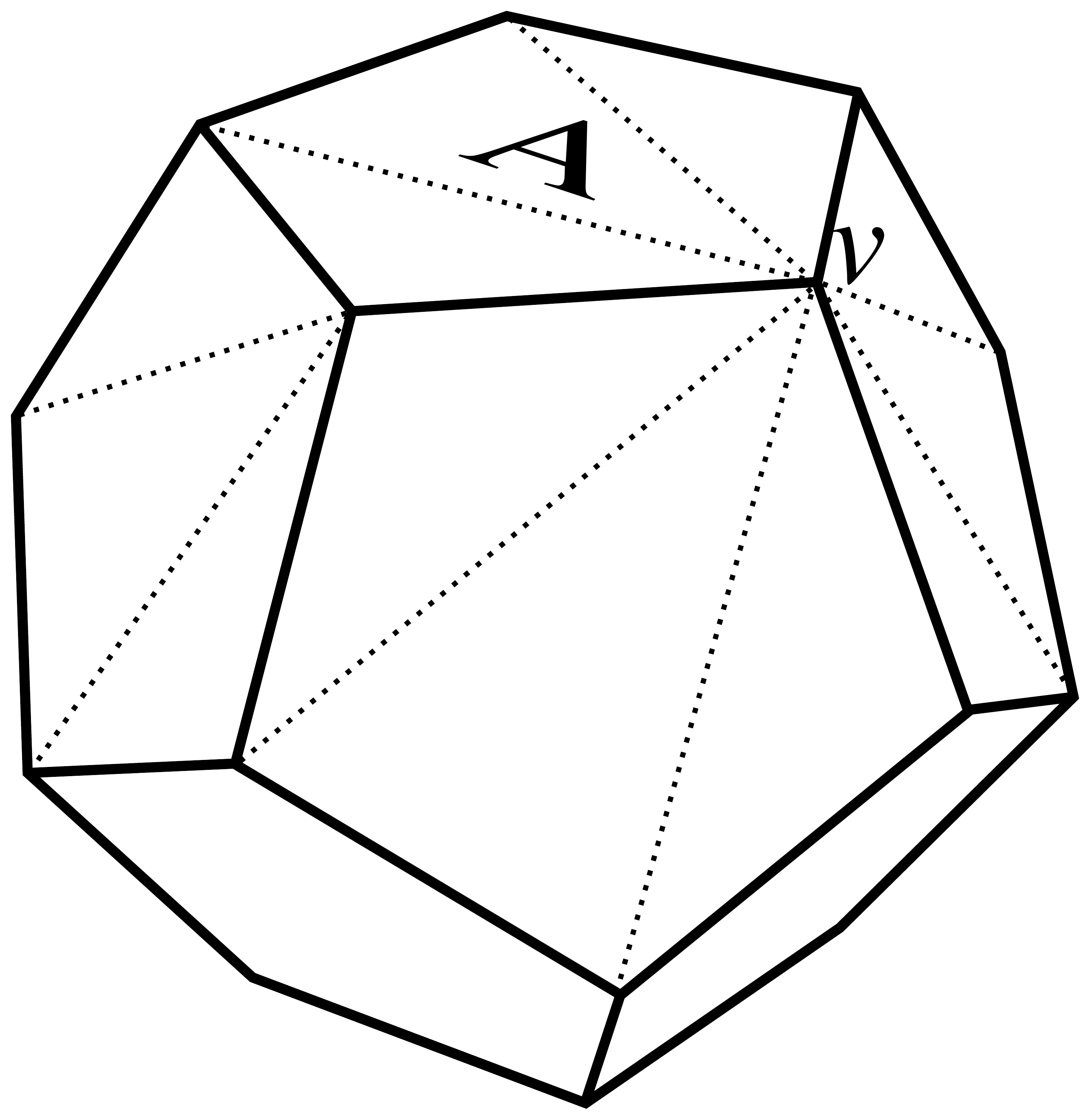}
\end{center}
\end{minipage}\\
\begin{minipage}[t]{7cm}
\begin{center}
\caption{Each dodecahedron not covered in Step~\ref{enum:stepOne} owns the shaded faces. We label the faces of such a dodecahedron as indicated.\label{fig:ownedPattern}}
\end{center}
\end{minipage} \hspace{1cm}
\begin{minipage}[t]{7cm}
\begin{center}
\caption{Choosing diagonals on $B$-faces when diagonals on the $A$-face have already been chosen to meet at vertex $v$. The choice for the three $B$-faces not touching $v$ does not matter.\label{fig:AalreadyChosen}}
\end{center}
\end{minipage}
\end{center}
\end{figure}
From now on, we only consider dodecahedra not covered in Step~\ref{enum:stepOne}.\\
Claim: Up to combinatorial symmetry, the pattern of faces owned by such a dodecahedron $P$ is always as indicated in Figure~\ref{fig:ownedPattern}. To see this, pick one face $f$ owned by $P$. We have two possible cases: $P$ owns none of the faces neighboring $f$ resulting in the pattern indicated in Figure~\ref{fig:ownedPattern} with $f$ corresponding to the face labeled $D$. $P$ owns at least one of the faces neighboring $f$, in which case the constraints (namely, $P$ cannot own three faces with a common vertex and owns exactly one face of any pair of opposite faces) quickly yield again the pattern indicated in Figure~\ref{fig:ownedPattern} with $f$ corresponding to a face labeled $B$.
\begin{enumerate}
\setcounter{enumi}{\value{diagonalAssignmentSteps}}
\item We choose diagonals randomly on all $D$-faces. \label{enum:stepDfaces}
\setcounter{diagonalAssignmentSteps}{\value{enumi}}
\end{enumerate}
Now we are left with choosing diagonals for all $B$-faces. Once we have made a decision about the diagonals on a face glued to the $A$-face of a dodecahedron $P$, we can safely choose diagonals on all of the $B$-faces of $P$ as in Figure~\ref{fig:AalreadyChosen} so that $P$ will fulfill the condition of the lemma. We might have chosen diagonals on some faces glued to $A$-faces already in Step~\ref{enum:stepOne} or \ref{enum:stepDfaces}. We can pick diagonals for the $B$-faces adjacent to those $A$-faces safely and then recurse, i.e., whenever we have picked diagonals on a face that is glued to an $A$-face of a dodecahedron $P$, we choose diagonals for the $B$-faces of $P$.\\
However, there might a $B$-face, say $f_0$, that this procedure will not reach since $f_0$ is adjacent to an $A$-face that is glued to a $B$-face that is adjacent to an $A$-face glued to a $B$-face, and so on until we are back at the $B$-face $f_0$. We will now describe how to resolve such dependency cycles before starting the above recursion.
\begin{figure}[h]
\captionsetup{width=7cm}
\begin{center}
\begin{minipage}[b]{7cm}
\begin{center}
\includegraphics[width=3.5cm]{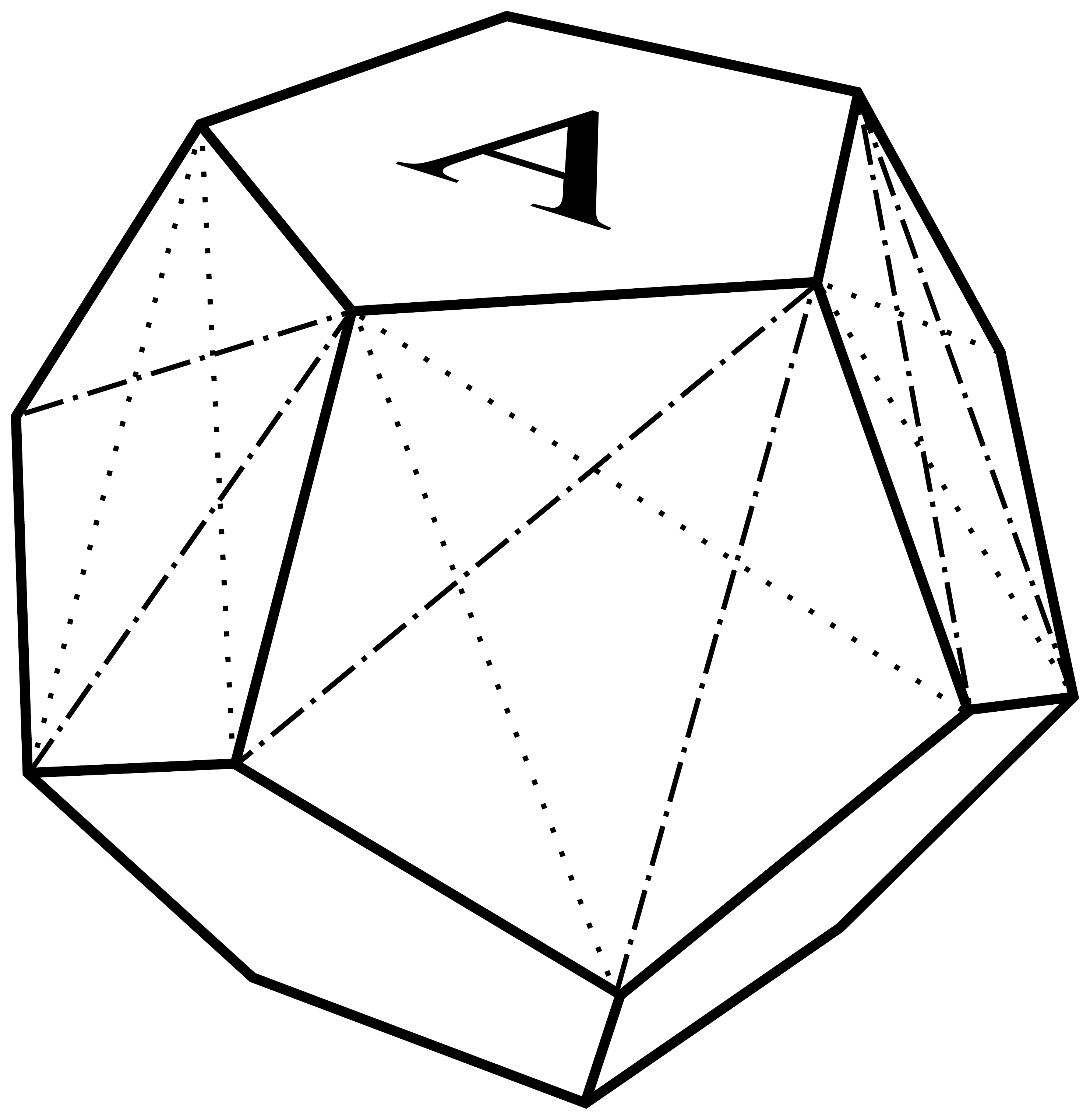}
\end{center}
\end{minipage}
\hspace{1.5cm}
\begin{minipage}[b]{7cm}
\begin{center}
\includegraphics[width=5.25cm]{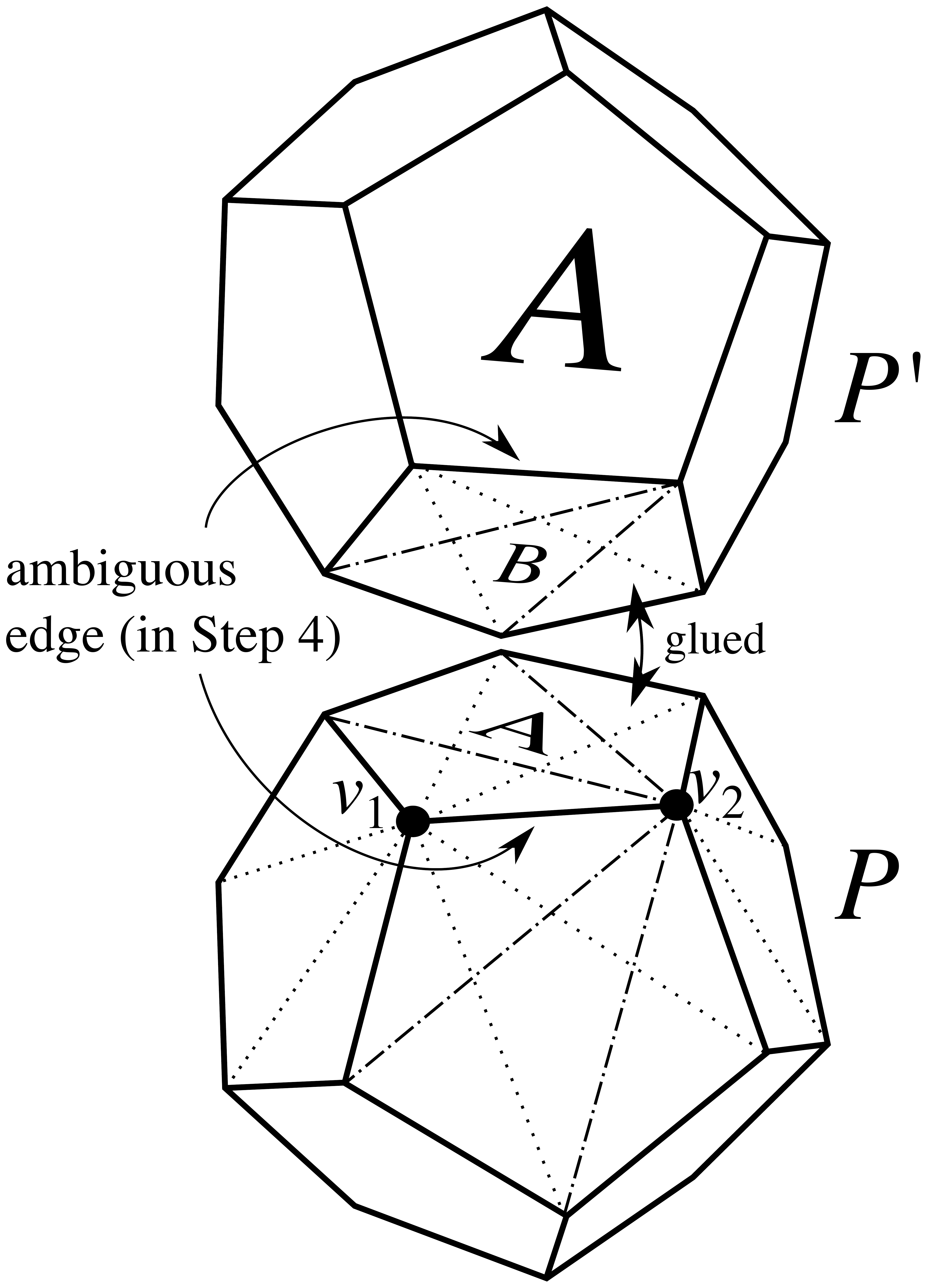}
\end{center}
\end{minipage}\\
\begin{minipage}[t]{7cm}
\begin{center}
\caption{We restrict the possible arrangements of diagonals on a $B$-face to the ones shown.\label{fig:allowedBdiagonals}}
\end{center}
\end{minipage}
\hspace{1.5cm}
\begin{minipage}[t]{7cm}
\caption{We choose diagonals on four $B$-faces of a dodecahedron if the $A$-face is glued to a $B$-face.\label{fig:twoDode}}
\end{minipage}
\end{center}
\end{figure}\\
We restrict the possible choices on each $B$-face to the two arrangements of diagonals for which the diagonals meet in a vertex of the $A$-face, see Figure~\ref{fig:allowedBdiagonals}, and perform the following step:
\begin{enumerate}
\setcounter{enumi}{\value{diagonalAssignmentSteps}}
\item For each $A$-face of a dodecahedron $P$ glued to a $B$-face of a (not necessarily distinct) dodecahedron $P'$, we do the following: the above restriction on the $B$-faces gives two possible arrangements of diagonals on the $A$-face of $P$ under the face gluing, see Figure~\ref{fig:twoDode}. Let $v_1$ and $v_2$ be the two vertices of $P$ where the diagonals of one or the other arrangement meet. For each $B$-face of $P$ touching one of the vertices but not the other, choose diagonals meeting at that vertex. For each $B$-face of $P$ touching neither $v_1$ nor $v_2$, choose diagonals randomly (among the two allowed arrangements). For a $B$-face of $P$ touching both $v_1$ and $v_2$, diagonals will be chosen in a later step.\label{enum:pickFourBfaces}
\setcounter{diagonalAssignmentSteps}{\value{enumi}}
\end{enumerate}
Let us examine the dependency cycles still left after this step. We cyclically label the $B$-faces in such a dependency cycle $f_0, \dots, f_{k-1}$ such that the $B$-face $f_0$ for which we have not chosen diagonals yet is adjacent to an $A$-face glued to a $B$-face $f_1$ for which we have not chosen diagonals yet, and so on until we are back at $f_0=f_k$. We call the edge of a $B$-face $f_j$ that is shared with the $A$-face of the same dodecahedron the ambiguous edge, see Figure~\ref{fig:twoDode}. The figure also shows that the ambiguous edge of $f_j$ and of $f_{j+1}$ are identified to become one edge in the cell complex which we also call an ambiguous edge. The faces $f_0,\dots,f_{k-1}$ form a fan about that ambiguous edge in the cell complex. When regarded as open interval, this edge in the cell complex has two distinct ends even if those two ends might correspond to the same ideal vertex.
\begin{enumerate}
\setcounter{enumi}{\value{diagonalAssignmentSteps}}
\item Pick an end $v$ for each ambiguous edge in the cell complex. Choose the diagonals for all the $B$-faces about the ambiguous edge such that the diagonals meet at $v$.
\setcounter{diagonalAssignmentSteps}{\value{enumi}}
\end{enumerate}
The recursive procedure already outlined above will now assign diagonals to all remaining $B$-faces:
\begin{enumerate}
\setcounter{enumi}{\value{diagonalAssignmentSteps}}
\item Whenever there is a dodecahedron $P$ such that we have already chosen diagonals on the (face glued to the) $A$-face of $P$ but not for some of the $B$-faces of $P$, pick diagonals for these $B$-faces as in Figure~\ref{fig:AalreadyChosen}.
\setcounter{diagonalAssignmentSteps}{\value{enumi}}
\end{enumerate}
\end{proof}

\bibliographystyle{hamsalpha}
\bibliography{dodecahedralBib}

\bigskip

\end{document}